\setlist[enumerate]{leftmargin=*, label=(\alph*)}
\setlist[itemize]{leftmargin=*}
\theoremstyle{plain}
\newtheorem{theorem}{Theorem}
\newtheorem{corollary}[theorem]{Corollary}
\newtheorem{lemma}[theorem]{Lemma}
\theoremstyle{definition}
\newtheorem{remark}[theorem]{Remark}
\tikzstyle{box} = [rectangle, minimum width=1cm, minimum height=3cm,text centered, draw=black, fill=red!10]
\newcommand{\N}{\mathbb{N}}
\newcommand{\NN}{\mathcal{N}}
\newcommand{\ceiling}[1]{\lceil #1 \rceil}
\newcommand{\floor}[1]{\lfloor #1 \rfloor}
\renewcommand{\Re}{\operatorname{Re}}
\renewcommand{\pmod}[1]{\,\,(\operatorname{mod}#1)}
\let\oldenumerate=\enumerate
	\def\enumerate{
	\oldenumerate
	\setlength{\itemsep}{5pt}
	}
\let\olditemize=\itemize
	\def\itemize{
	\olditemize
	\setlength{\itemsep}{5pt}
	}
\begin{document}

\title[The error term in the truncated Perron formula]{The error term in the truncated Perron formula for the logarithm of an $L$-function}

	\author[S.~R.~Garcia]{Stephan Ramon Garcia}
	\address{Department of Mathematics and Statistics, Pomona College, 610 N. College Ave., Claremont, CA 91711, USA} 
	\email{stephan.garcia@pomona.edu}
	\urladdr{\url{http://pages.pomona.edu/~sg064747}}

    \author[J.~Lagarias]{Jeffrey Lagarias}
    \address{Department of Mathematics, University of Michigan, 530 Church Street, Ann Arbor, MI 48109, USA}
    \email{lagarias@umich.edu}
    \urladdr{\url{https://dept.math.lsa.umich.edu/~lagarias}}
           	
    \author[E.~S.~Lee]{Ethan Simpson Lee}
    \address{University of Bristol, School of Mathematics, Fry Building, Woodland Road, Bristol, BS8 1UG} 
    \email{ethan.lee@bristol.ac.uk}
    \urladdr{\url{https://sites.google.com/view/ethansleemath/home}}

\begin{abstract}
We improve upon the traditional error term in the truncated Perron formula for the logarithm of an $L$-function. All our constants are explicit.
\end{abstract}

\thanks{SRG supported by NSF Grant DMS-2054002. ESL
thanks the Heilbronn Institute for Mathematical Research for their support. We also thank the referee for their suggestions.}
\keywords{Perron formula, $L$-function, zeta function, Lambert function}
\subjclass[2010]{11M06, 11R42, 11M99, 11S40}

\maketitle

\vspace{-20pt}
\section{Introduction}

The truncated Perron formula relates the summatory function of an arithmetic function to a contour integral that may be estimated using techniques from complex analysis. Let $F(s) = \sum_{n=1}^{\infty} f(n) n^{-s}$ be absolutely convergent on $\Re s > c_F$;
examples include the Riemann zeta-function, Dirichlet $L$-functions, 
the Dedekind zeta-function associated to a number field, and Artin $L$-functions. 
The truncated Perron formula tells us that if $x>0$ is not an integer, 
$T\geq 1$, and $c > c_F$, then
\begin{equation}\label{eqn:classicalTPF}
    \sum_{n \leq x} f(n) = \frac{1}{2\pi i }\int_{c-iT}^{c+iT} F(s) \frac{x^s}{s}\,ds + 
    O^*\bigg( \sum_{n=1}^{\infty} \bigg( \frac{x}{n} \bigg)^{c} |f(n)| \min\Big\{ 1, \frac{1}{T |\log \frac{x}{n}|}\Big\}\bigg),
\end{equation}
in which $O^*(g(x)) = h(x)$ means $|h(x)|\leq g(x)$; see \cite[Ex.~4.4.15]{MurtyAnalytic}, \cite[Ch.~7]{Koukoulopoulos}, \cite[Sect.~II.2]{Tenenbaum}, and \cite[Sect.~5.1]{MontgomeryVaughan}. We let $T$ depend on $x$ and let $c = c_F + 1/\log{x}$, so that $x^c = ex^{c_F}$. A variation of \eqref{eqn:classicalTPF} improves the order of the error term by truncating the integral at $\pm T^*$ for an unknown $T^*\in [T,O(T)]$ \cite{CullyHugillJohnston}, although this is inconvenient if one must avoid $T^*$ that correspond to the ordinates of non-trivial zeros of $F(s)$. The authors of \cite{CullyHugillJohnston} have also informed us in a personal communication that their paper inherited an unfortunate typo from another paper, so the error term in their variation of the truncated Perron formula could be worse by a factor of $\log{x}$; this means that our main result (Theorem \ref{Theorem:LogLog}) will be comparable in strength \textit{and} more straightforward to apply when compared against the outcome of their result.

For $\Re{s} > 1$, the logarithm of the Riemann zeta function $\zeta(s) = \sum_{n=1}^{\infty} n^{-s}$ is 
$\log \zeta(s) = \sum_{n=1}^{\infty} \Lambda(n)(\log n)^{-1}n^{-s}$, in which $\Lambda(n)$ 
is the von Mangoldt function.  The logarithm of a typical $L$-function is of the form
$\sum_{n=1}^{\infty} \Lambda(n)a_n(\log n)^{-1}n^{-s}$,
in which the $a_n$ are easily controlled.  For example, $|a_n| \leq 1$
for Dirichlet $L$-functions and $|a_n|\leq d$ for Artin $L$-functions of degree $d$; see \cite[Ch.~5]{iwaniec2004analytic}.
In these cases, the error term in \eqref{eqn:classicalTPF} with $c = 1/\log{x}$ is on the order of
\begin{equation}\label{eq:Traditional}
    \sum_{n=2}^{\infty} \Big( \frac{x}{n}\Big)^{\frac{1}{\log x}}  \frac{ \Lambda(n)}{n \log n}
    \min\Big\{ 1 , \frac{1}{T | \log \frac{x}{n} |} \Big\} 
    = O\bigg( \frac{\log x}{T} \bigg).
\end{equation}
Granville and Soundararajan used \eqref{eq:Traditional} with Dirichlet $L$-functions
to study large character sums \cite[(8.1)]{GranvilleSound}.
Cho and Kim applied it to Artin $L$-functions to obtain asymptotic bounds on Dedekind zeta residues \cite[Prop.~3.1]{ChoKim}.
A bilinear relative of \eqref{eq:Traditional} appears in Selberg's work on primes in short intervals \cite[Lem.~4]{Selberg}.  
Analogous sums arise with the logarithmic derivative of an $L$-function \cite[p.~106]{Davenport}, \cite[p.~44]{Patterson}.

We improve upon \eqref{eq:Traditional} asymptotically and explicitly.  Moreover, our result has a wide and explicit range of applicability.

\begin{theorem}\label{Theorem:LogLog}
If $x \geq 3.5$ is a half integer and $T \geq (\log \frac{3}{2})^{-1} > 2.46$, then
\begin{equation}\label{eq:MainSum}
    \sum_{n=2}^{\infty} \Big( \frac{x}{n}\Big)^{\frac{1}{\log x}}  \frac{ \Lambda(n)}{n \log n} \min\Big\{ 1 , \frac{1}{T | \log \frac{x}{n} |} \Big\} 
    \leq
    \frac{R(x)}{T},
\end{equation}
in which
\begin{equation}\label{eq:Rx}
    R(x) = 
    40.23 \log \log x
    +58.12
    +\frac{3.87}{\log x}
    +\frac{5.22 \log x}{\sqrt{x}}
    -\frac{1.84}{\sqrt{x}}.
\end{equation}
\end{theorem}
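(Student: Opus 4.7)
The plan is to split the left-hand side of \eqref{eq:MainSum} according to the proximity of $n$ to $x$, exploiting the fact that $|n - x| \geq 1/2$ for every integer $n$ because $x$ is a half-integer. Write the sum as $S_1 + S_2 + S_3$, where $S_1$ is the portion with $2 \leq n \leq x/2$, $S_2$ is the portion with $x/2 < n < 2x$, and $S_3$ is the portion with $n \geq 2x$. Throughout, I will use $(x/n)^{1/\log x} = e \cdot n^{-1/\log x}$ and rewrite $\Lambda(n)/\log n = 1/k$ when $n = p^k$, so that all the sums become sums over prime powers with explicit weights.

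For $S_1$ and $S_3$, I would observe that $|\log(x/n)| \geq \log 2$, so the minimum in the summand is at most $(T \log 2)^{-1}$; this pulls an explicit factor of $1/T$ outside. In $S_1$ the remaining sum is bounded by $\sum_{p^k \leq x/2} p^{-k(1+1/\log x)}/k$. Its $k = 1$ contribution is estimated by an explicit form of Mertens' second theorem (in the style of Rosser--Schoenfeld or Dusart), and this is the source of the leading $\log\log x$ term in $R(x)$; the $k \geq 2$ part converges and produces a constant contribution. For $S_3$, the factor $(x/n)^{1/\log x} \leq 1$ combines with an explicit Chebyshev-type tail bound for $\sum_{n \geq 2x} \Lambda(n) n^{-(1+1/\log x)}$, producing the $1/\sqrt{x}$ correction terms in $R(x)$ once one controls $x^{1/\log x}(2x)^{-1/\log x}$ against $1/2$.

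For $S_2$, the starting point is the elementary inequality $\log(1+u) \geq u/2$ for $u \in [0,1]$, which gives $|\log(x/n)| \geq |n - x|/(2x)$ throughout $n \in [x/2, 2x]$. Thus the min is bounded by $\min\{1, 2x/(T|n-x|)\}$, and I split $S_2$ further according to whether $|n - x| \leq 2x/T$ (the min equals $1$) or $|n - x| > 2x/T$ (the second bound is smaller). The short-interval piece is controlled by an explicit Brun--Titchmarsh type estimate for $\sum_{|n-x| \leq 2x/T} \Lambda(n)$, and its contribution is again of order $1/(T \log x)$. The complementary piece reduces, after extracting $2x/T$, to $\sum_{x/2 \leq n \leq 2x,\; |n-x|>2x/T} \Lambda(n)/(n \log n \cdot |n - x|)$, which I handle by Abel summation against an explicit approximation to $\psi(t)$ on $[x/2, 2x]$; the resulting integrals produce a bounded contribution uniform in $T$.

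The main obstacle will be the bookkeeping of constants so that the coefficients of $\log\log x$ and $1$ in $R(x)$ come out to exactly $40.23$ and $58.12$. This requires choosing the right explicit versions of Mertens, Chebyshev, and Brun--Titchmarsh so that the coefficients multiply favourably, sharpening the boundary terms (especially the transition at $|n - x| = 2x/T$, which contributes to both subranges of $S_2$), and absorbing all residual $x$-dependent errors into the lower-order terms $3.87/\log x$, $5.22 \log x / \sqrt{x}$, and $-1.84/\sqrt{x}$. A secondary challenge is ensuring that the hypothesis $T \geq (\log \tfrac{3}{2})^{-1}$ is exactly what is needed to make $2x/T \leq x/2$ consistent with the splitting point between $S_1 \cup S_3$ and $S_2$, so that the proximity-based case analysis is non-trivial for every admissible $T$.
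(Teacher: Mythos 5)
Your outer decomposition (far range handled via $|\log\frac{x}{n}|\gg 1$ and a Mertens/zeta bound, near range reduced to $\sum \Lambda(n)/(n\log n\,|n-x|)$-type quantities) parallels the paper, but the heart of the proof is the near range, and there your sketch has a genuine gap. After extracting $2x/(T|n-x|)$, you must bound
\begin{equation*}
\sum_{\substack{x/2<n<2x\\ |n-x|>2x/T}}\frac{\Lambda(n)}{n\log n\,|n-x|}
\end{equation*}
uniformly in $T$, and for large $T$ this includes the integers nearest to $x$, i.e.\ possible primes at distances $\tfrac12,\tfrac32,\tfrac52,\dots$ from $x$. The tool you invoke, ``Abel summation against an explicit approximation to $\psi(t)$ on $[x/2,2x]$,'' cannot control this: no explicit approximation to $\psi$ is accurate in intervals of bounded (or even moderately short) length around $x$, and after partial summation the boundary/short-interval terms are exactly the uncontrolled ones. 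What is needed is an upper-bound sieve applied at a whole hierarchy of scales: the paper runs Brun--Titchmarsh with window lengths $y_n=\tfrac{n}{2}\log n$ chosen via the Lambert $W_{-1}$ function so that $(x-y_n,x]$ contains at most $n$ primes, which converts $\sum_p 1/|x-p|$ into $\sum_n 1/y_n\approx 2\log\log x$; and for the handful of primes nearest to $x$, where Brun--Titchmarsh is useless (its bound $2Y/\log Y$ blows up for bounded $Y$), it proves a sharp combinatorial lemma modulo $30$ bounding the eight nearest primes' contribution by an explicit constant $C$. Your claim that this piece is ``a bounded contribution uniform in $T$'' is both unsubstantiated by the cited tools and, as stated, stronger than what is provable; the best known bound here is of size $\log\log x$, which is precisely why $\log\log x$ appears with a large coefficient in $R(x)$.

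Two further points. First, prime powers $p^k$ with $k\ge 2$ lying close to $x$ also enter the near-range sum, and your sketch gives them no separate treatment; the paper needs a gap estimate for $k$th powers (gaps $\ge k\sqrt{X}$) plus harmonic-number sums to produce the $\log x/\sqrt{x}$ terms of $R(x)$. Second, the numerical targets are not ``bookkeeping'': the coefficient $40.23\approx 6e/\log\frac32$ is tied to the paper's specific splitting at $(\tfrac{x}{2},\tfrac{3x}{2})$ and its sieve-at-all-scales argument, so a different decomposition must be shown to reproduce constants at least that small, which your outline does not do. (Also, your closing remark that $T\ge(\log\frac32)^{-1}$ is exactly what makes $2x/T\le x/2$ is arithmetically off: that would require $T\ge 4$.)
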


The following corollary employs \eqref{eqn:classicalTPF} with $T=x$ and $c = 1/\log{x}$. 
Since one can use analytic techniques to see the integral below is asymptotic to $\log{L(1,\chi)}$, 
one can relate $\log{L(1,\chi)}$ to a short sum.  We hope to do so explicitly in the future.

\begin{corollary}\label{cor:loglog}
Let $L(s, \chi)$ be an entire Artin $L$-function of degree $d$ such that 
\begin{equation*}
    L(s,\chi) = \prod_{p}\prod_{i=1}^{d} \left(1 - \frac{\alpha_i(p)}{p^s}\right)^{-1} 
    \quad \text{for $\Re s > 1$}
\end{equation*}
with $a(p^k) = \alpha_1(p)^k + \cdots + \alpha_d(p)^k$ for prime $p$.  Then with $R(x)$ as in \eqref{eq:Rx}
\begin{equation*}
    \sum_{1<n < x} \frac{\Lambda(n) a(n)}{n \log{n}} 
    = \frac{1}{2\pi i }\int_{\tfrac{1}{\log{x}}-ix}^{\tfrac{1}{\log{x}}+ix} \frac{x^s}{s} \log{L(1+s,\chi)}\,ds + 
    O^*\!\left( \frac{d\,R(x)}{x} \right).
\end{equation*}
\end{corollary}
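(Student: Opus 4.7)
The plan is to derive Corollary \ref{cor:loglog} as a direct application of Theorem \ref{Theorem:LogLog} to the Dirichlet series of $\log L(1+s,\chi)$. Expanding each Euler factor via $-\log(1-\alpha_i(p)/p^s) = \sum_{k \geq 1} \alpha_i(p)^k/(kp^{ks})$ and summing over $i$ gives
\begin{equation*}
    \log L(s,\chi) \;=\; \sum_{p} \sum_{k=1}^{\infty} \frac{a(p^k)}{k\, p^{ks}} \;=\; \sum_{n=2}^{\infty} \frac{\Lambda(n)\, a(n)}{n^s \log n},
\end{equation*}
since $\Lambda(n)/\log n = 1/k$ when $n = p^k$ and vanishes otherwise. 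Thus $F(s) := \log L(1+s,\chi) = \sum_{n=2}^\infty f(n) n^{-s}$ with $f(n) = \Lambda(n)a(n)/(n \log n)$, which is absolutely convergent on $\Re s > 0$; I take $c_F = 0$.

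Next I would apply the classical truncated Perron formula \eqref{eqn:classicalTPF} to $F$, with $c = 1/\log x$ and $T = x$, using a half-integer $x \geq 3.5$ (so $x$ is not an integer, as \eqref{eqn:classicalTPF} requires, and the sum $\sum_{1<n<x}$ coincides with $\sum_{n \leq x}$). This produces exactly the contour integral in the statement, with an $O^*$-error equal to
\begin{equation*}
    \sum_{n=2}^{\infty} \Big(\frac{x}{n}\Big)^{1/\log x} \frac{\Lambda(n)\,|a(n)|}{n \log n} \min\Big\{1,\,\frac{1}{x\,|\log(x/n)|}\Big\}.
\end{equation*}

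For the coefficient bound, I would invoke the standard fact that for an Artin $L$-function of degree $d$ the local roots $\alpha_i(p)$ are eigenvalues of Frobenius on a subspace of a $d$-dimensional representation, hence satisfy $|\alpha_i(p)| \leq 1$ (see \cite[Ch.~5]{iwaniec2004analytic}). Therefore $|a(p^k)| \leq d$ for every prime power, and since $\Lambda(n) = 0$ off the prime powers, $|a(n)| \leq d$ on the support of the sum. Pulling out the factor $d$ and applying Theorem \ref{Theorem:LogLog} with $T = x \geq 3.5 > (\log \tfrac{3}{2})^{-1}$ bounds the error by $dR(x)/x$, completing the proof. The main potential pitfall is bookkeeping: remembering that the shift $s \mapsto 1+s$ converts $c_F = 1$ for $\log L(s,\chi)$ into $c_F = 0$ for $F(s)$, so that the choice $c = 1/\log x$ in Theorem \ref{Theorem:LogLog} is the correct one; there is no serious analytic difficulty beyond what Theorem \ref{Theorem:LogLog} already handles.
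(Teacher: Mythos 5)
Your argument is correct and is exactly the route the paper intends: the paper's one-line indication before the corollary is to apply \eqref{eqn:classicalTPF} with $c=1/\log x$ and $T=x$ to $\log L(1+s,\chi)$, use $|a(n)|\leq d$ from \cite[Ch.~5]{iwaniec2004analytic}, and invoke Theorem \ref{Theorem:LogLog}. Your bookkeeping (half-integer $x\geq 3.5$, $c_F=0$ after the shift $s\mapsto 1+s$, and $T=x>(\log\tfrac32)^{-1}$) matches what is needed, so there is nothing to add.
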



\section{Preliminaries}\label{Section:Lemmas}
Here we establish several lemmas needed for the proof of Theorem \ref{Theorem:LogLog}.

\begin{lemma}\label{Lemma:Stieltjes}
    If $\sigma >0$, then $\log \zeta(1+\sigma) \leq - \log \sigma + \gamma \sigma$.
\end{lemma}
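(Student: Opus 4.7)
Exponentiating, the inequality is equivalent to $\sigma\,\zeta(1+\sigma) \le e^{\gamma\sigma}$ for $\sigma > 0$. Introduce $h(\sigma) := \log\zeta(1+\sigma) + \log\sigma - \gamma\sigma$; the goal is to show $h(\sigma) \le 0$ on $(0,\infty)$. The Stieltjes (Laurent) expansion
\[
\zeta(1+\sigma) = \tfrac{1}{\sigma} + \gamma - \gamma_1\sigma + \tfrac{\gamma_2}{2}\sigma^2 - \cdots
\]
at $s=1$---the source of the lemma's name---immediately yields $\sigma\zeta(1+\sigma) = 1 + \gamma\sigma + O(\sigma^2)$, so $h(0^+) = 0$.

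The natural strategy is to show $h'(\sigma) \le 0$ for all $\sigma > 0$ and then integrate from $0$. Using $-\zeta'(s)/\zeta(s) = \sum_{n \ge 1} \Lambda(n) n^{-s}$ on $\Re s > 1$,
\[
h'(\sigma) = \frac{1}{\sigma} - \gamma + \frac{\zeta'(1+\sigma)}{\zeta(1+\sigma)} = \frac{1}{\sigma} - \gamma - \sum_{n\ge 2} \frac{\Lambda(n)}{n^{1+\sigma}},
\]
so the problem reduces to the key inequality
\[
\sum_{n\ge 2} \frac{\Lambda(n)}{n^{1+\sigma}} \;\ge\; \frac{1}{\sigma} - \gamma, \qquad \sigma > 0.
\]
Differentiating the Stieltjes expansion gives $-\zeta'/\zeta(1+\sigma) = 1/\sigma - \gamma + (\gamma^2 + 2\gamma_1)\sigma + O(\sigma^2)$; since $\gamma^2 + 2\gamma_1 \approx 0.187 > 0$, the inequality holds in a neighbourhood of $0$ (consistent with $h(\sigma) \sim -(\gamma^2/2 + \gamma_1)\sigma^2$, whose leading coefficient is $\approx -0.094$).

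For the global bound I would combine the partial-summation identity
\[
\sum_{n\ge 1} \frac{\Lambda(n)}{n^{1+\sigma}} = (1+\sigma)\int_1^\infty \frac{\psi(x)}{x^{2+\sigma}}\,dx
\]
with the integral representations $1/\sigma = \int_1^\infty x^{-1-\sigma}\,dx$ and $\gamma = 1 - \int_1^\infty \{x\}\,x^{-2}\,dx$. The main obstacle is that $\psi(x) - x$ oscillates in sign, so one cannot compare integrands pointwise; the required inequality must be extracted as an integrated/averaged statement---for instance, by rewriting $\sum_n \Lambda(n) n^{-1-\sigma} - (1/\sigma - \gamma)$ as a Dirichlet series with manifestly non-negative coefficients. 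Failing that, one can split between small $\sigma$ (handled by the Stieltjes expansion, with explicit control on $|\gamma_k|$) and large $\sigma$ (where $\log\zeta(1+\sigma) \to 0$ while $\gamma\sigma - \log\sigma \to \infty$, so $h(\sigma) \le 0$ trivially), and close the middle range by direct estimation.
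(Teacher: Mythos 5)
There is a genuine gap: your argument never actually proves the statement. After the (correct) reduction to showing $h'(\sigma)\le 0$, i.e.
\begin{equation*}
\sum_{n\ge 2}\frac{\Lambda(n)}{n^{1+\sigma}}\;\ge\;\frac{1}{\sigma}-\gamma\qquad(\sigma>0),
\end{equation*}
you verify this only in a neighbourhood of $\sigma=0$ (via the Stieltjes expansion, with no explicit radius of validity and no control of the remainder), observe that it is trivial for large $\sigma$, and then say the middle range can be closed ``by direct estimation'' or by finding a Dirichlet series with non-negative coefficients --- you even flag yourself the obstruction (the sign changes of $\psi(x)-x$ block a pointwise comparison) without resolving it. None of these closing steps is carried out, no threshold separating the ranges is specified, and no explicit bounds on the Stieltjes constants are invoked, so the crux of the lemma is left unproved. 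Note also that the monotonicity claim you reduce to is a \emph{stronger} global statement about $-\zeta'/\zeta$ than the lemma itself (the lemma would follow from it together with $h(0^+)=0$, not conversely), so this route replaces the target by something at least as hard; since $\sigma\zeta(1+\sigma)=1+\gamma\sigma+O(\sigma^2)$, the constant $\gamma$ is essentially sharp as $\sigma\to 0^+$, and no soft estimate will close the gap.

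For comparison, the paper's proof is two lines: it quotes the known explicit bound $\zeta(s)\le e^{\gamma(s-1)}/(s-1)$ for $s>1$ (Ramar\'e, \cite[Lem.~5.4]{RamareExplicitDensity}) and takes logarithms with $s=1+\sigma$. If you want a self-contained argument rather than a citation, you would need to prove that explicit bound (equivalently, that $\sigma\zeta(1+\sigma)e^{-\gamma\sigma}\le 1$), for instance from the integral representation $\zeta(s)=\frac{s}{s-1}-s\int_1^\infty\{u\}u^{-s-1}\,du$ together with $\int_1^\infty\{u\}u^{-2}\,du=1-\gamma$, with all error terms made quantitative --- precisely the work your sketch defers.
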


\begin{proof}
For $s> 1$, we have $\zeta(s) \leq e^{\gamma(s-1)}/(s-1)$ \cite[Lem.~5.4]{RamareExplicitDensity}.
Let $s = 1+\sigma$ and take logarithms to obtain the desired result.
\end{proof}

For real $z,w$, the equation $z = we^w$ can be solved for $w$ if and only if $z \geq -e^{-1}$.  There are two branches
for $-e^{-1} \leq z < 0$.  The lower branch defines
the \emph{Lambert $W_{-1}(z)$ function} \cite{Lambert}, 
which decreases to $-\infty$ as $z\to 0^-$; see Figure \ref{Figure:Lambert}.
For $n \geq 6>2e$, we define the strictly increasing sequence
\begin{equation}\label{eq:Yin}\qquad
y_n = \frac{-n}{2} W_{-1}\Big( \frac{-2}{n} \Big) \qquad \text{for $n \geq 6$}.
\end{equation}

\begin{lemma}\label{Lemma:Whine}
    For $n \geq 8$, we have $\frac{2y_n}{\log y_n} = n$ and $ y_n \geq \frac{n}{2} \log n$.
\end{lemma}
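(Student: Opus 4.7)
The plan is to prove both parts by translating the Lambert $W$ definition back to an elementary equation in $y_n$, and then analyzing that equation with single-variable calculus.

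First, I would set $w = W_{-1}(-2/n)$, so that by definition of the Lambert function $w e^w = -2/n$. Since $y_n = -nw/2$, this is equivalent to $w = -2y_n/n$, and substituting gives
\[
\Bigl(-\tfrac{2y_n}{n}\Bigr)\exp\Bigl(-\tfrac{2y_n}{n}\Bigr) = -\tfrac{2}{n},
\]
which simplifies to $y_n \exp(-2y_n/n) = 1$, i.e.\ $2y_n/n = \log y_n$. Rearranging yields $\frac{2y_n}{\log y_n} = n$, the first claim. This also gives the useful identity $y_n = \tfrac{n}{2}\log y_n$.

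For the inequality $y_n \ge \tfrac{n}{2}\log n$, the identity above reduces the problem to showing $y_n \ge n$. The plan is to identify $y_n$ as the larger real root of $\phi(y) := \tfrac{2y}{n} - \log y = 0$. Since $\phi'(y) = \tfrac{2}{n} - \tfrac{1}{y}$, the function $\phi$ is strictly convex with unique minimum at $y = n/2$, at which $\phi(n/2) = 1 - \log(n/2) < 0$ for $n \ge 6$. So $\phi$ has exactly two roots $y_{-} < n/2 < y_{+}$, and $\phi$ is negative on the interval $(y_{-}, y_{+})$. The $W_{-1}$ branch takes values in $(-\infty,-1]$ on $[-1/e,0)$, forcing $-2y_n/n \le -1$, i.e.\ $y_n \ge n/2$, so $y_n$ must be the larger root $y_+$.

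It then suffices to verify $\phi(n) \le 0$ for $n \ge 8$, since this places $n$ in the interval $(y_-, y_+)$, and combined with $n > n/2$ gives $n \le y_+ = y_n$. But $\phi(n) = 2 - \log n$, which is $\le 0$ exactly when $n \ge e^2 \approx 7.389$, so the bound holds for all integers $n \ge 8$. Substituting $y_n \ge n$ back into $y_n = \tfrac{n}{2}\log y_n$ yields $y_n \ge \tfrac{n}{2}\log n$.

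No real obstacle is expected; the only delicate point is correctly pinning down \emph{which} of the two roots of $\phi$ corresponds to $W_{-1}$ (as opposed to the principal branch $W_0$), which is handled by the range $W_{-1}\bigl([-1/e,0)\bigr) \subseteq (-\infty,-1]$.
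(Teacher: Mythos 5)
Your proof is correct, and it lands on the same numerical threshold $n \ge e^{2}$ as the paper, but by a somewhat different route. The paper does not pass through the identity $y_n = \tfrac{n}{2}\log y_n$ when proving the inequality: it rewrites $y_n \ge \tfrac{n}{2}\log n$ directly as $W_{-1}(-\tfrac{2}{n}) \le -\log n$ and, using that $f(w)=we^{w}$ is decreasing on $(-\infty,-1]$, converts this to $-\tfrac{2}{n} \ge f(-\log n) = -\tfrac{\log n}{n}$, i.e.\ $\log n \ge 2$. You instead stay on the $y$-side: from the defining equation you extract both $\tfrac{2y_n}{\log y_n}=n$ and $y_n=\tfrac{n}{2}\log y_n$, reduce the inequality to $y_n \ge n$, identify $y_n$ as the larger root of the convex function $\phi(y)=\tfrac{2y}{n}-\log y$ (using the range $W_{-1}\le -1$ to exclude the smaller root), and finish with the sign check $\phi(n)=2-\log n\le 0$ for $n\ge 8$. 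The two arguments are dual: your root-location step is the inverse-function counterpart of the paper's monotonicity step, so neither is stronger; yours needs only the defining equation of $W_{-1}$ together with its range $(-\infty,-1]$, at the cost of a slightly longer convexity discussion, while the paper's is shorter because the monotonicity of $we^{w}$ does the root separation implicitly. One cosmetic point: when rearranging $\log y_n = \tfrac{2y_n}{n}$ into $\tfrac{2y_n}{\log y_n}=n$ you should note $\log y_n>0$, which is immediate since $y_n>0$ forces $\log y_n=\tfrac{2y_n}{n}>0$.
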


\begin{proof}
    For $n \geq 6$, the definition of $W_{-1}$ and \eqref{eq:Yin} confirm that $\frac{2y_n}{\log y_n} = n$.
    Thus, the desired inequality is equivalent to
    $W_{-1}(\frac{-2}{n} ) \leq -\log n$.
    Since $f(w) = we^w$ decreases on $(-\infty,-1]$ (Figure \ref{Figure:WeW}) and $-\frac{1}{e} < -\frac{2}{n} <0$, the desired inequality
    is equivalent to
    \begin{equation*}
    -\frac{2}{n} \geq f(-\log n) = (-\log n)e^{-\log n} = -\frac{\log n}{n},
    \end{equation*}
    which holds whenever $\log n \geq 2$.  This occurs for $n \geq e^2 \approx 7.38906$.
\end{proof}

\begin{remark}
For all $-e^{-1} \leq x < 0$, the bound $W_{-1}(x) \leq \log(-x) - \log(-\log(-x))$ is valid; see \cite[(8), (39)]{Loczi}. It follows from this observation and \eqref{eq:Yin} that
\begin{equation*}
    y_n 
    \geq \frac{n}{2} \left(\log\left(\frac{1}{2}\log\frac{n}{2}\right) + \log{n} \right),
\end{equation*}
which also implies Lemma \ref{Lemma:Whine} for $n\geq 15$.
\end{remark}

The next lemma is needed later to handle a few exceptional primes.

\begin{figure}
\centering
\begin{subfigure}[t]{0.47\textwidth}
\centering
\includegraphics[width=0.9\textwidth]{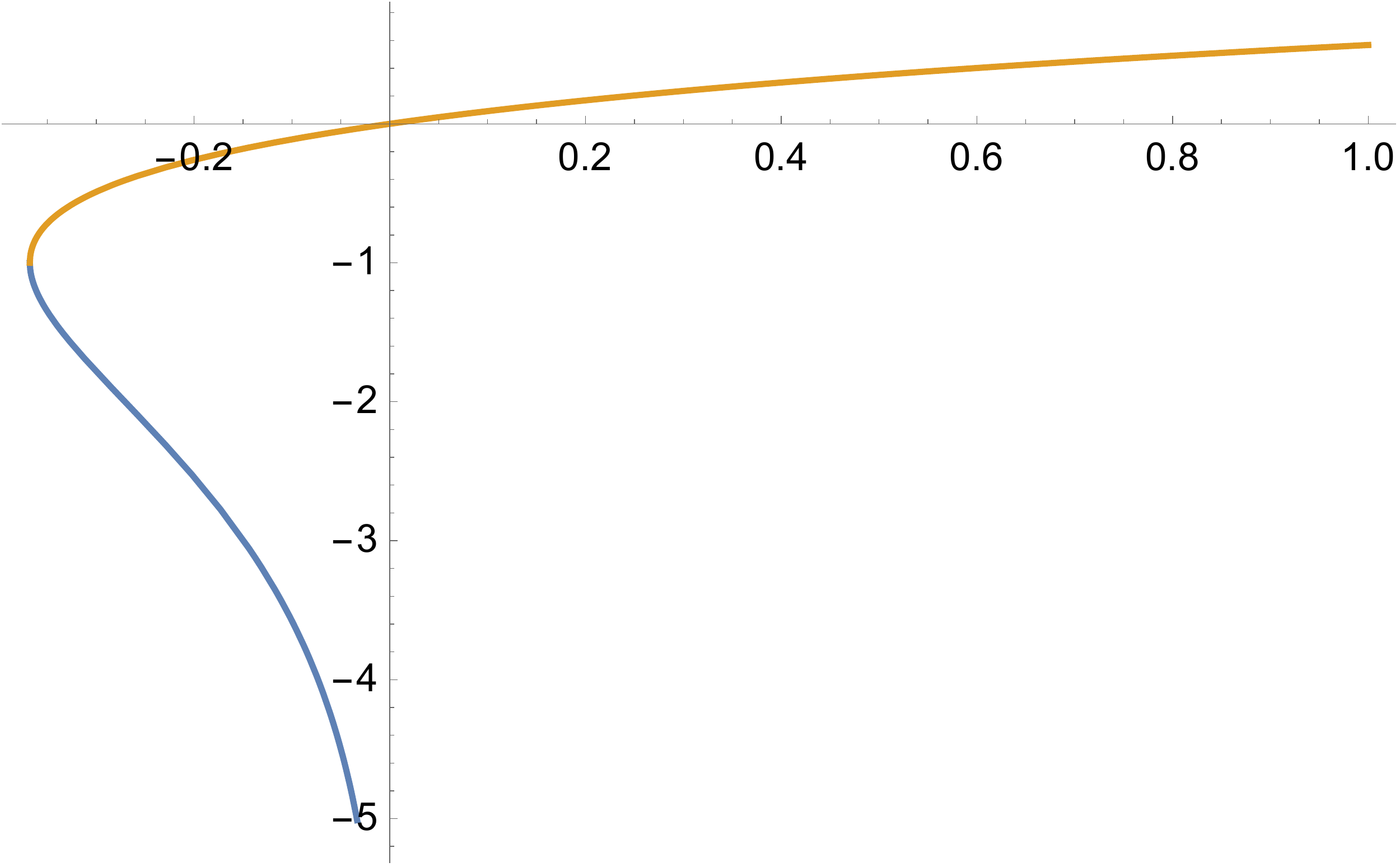}
\caption{The two branches of $z=we^w$.  The lower branch (blue) is the Lambert function $W_{-1}(z)$,
the upper branch (gold) is $W_0(z)$.}
\label{Figure:Lambert}
\end{subfigure}
\hfill
\begin{subfigure}[t]{0.47\textwidth}
\centering
\includegraphics[width=0.9\textwidth]{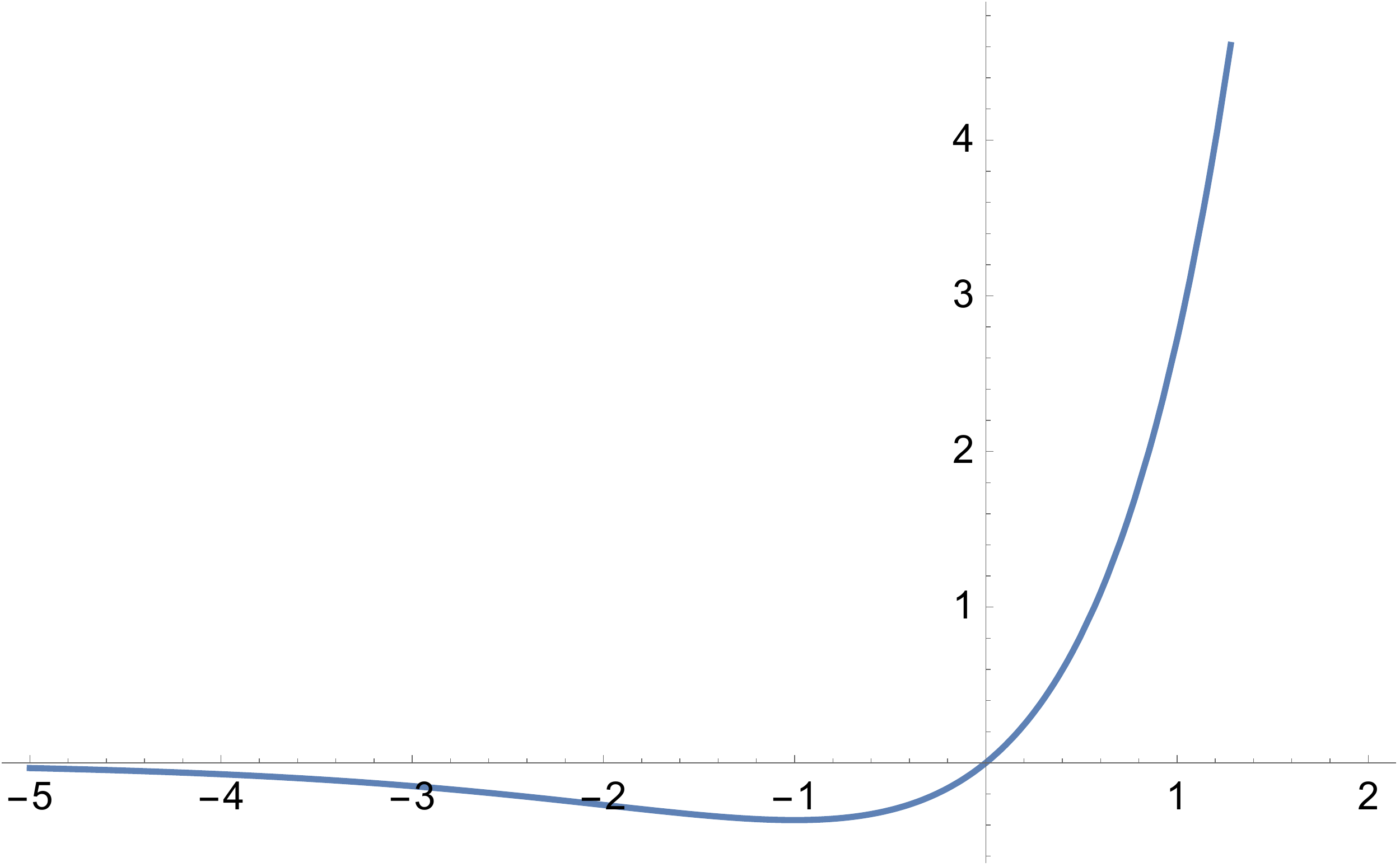}
\caption{$f(w) = we^w$ decreases on $(-\infty,-1]$.}
\label{Figure:WeW}
\end{subfigure}
\caption{Graphs relevant to the construction of the sequence $y_n$.}
\end{figure}

\begin{lemma}\label{Lemma:Sieve30}
    Let $x>1$ be a half integer and let $C = \frac{1284699552}{444215525}= 2.89206\ldots$.  
    \begin{enumerate}
        \item Let $p_{-8} < p_{-7} < \cdots < p_{-1} < x$ denote the largest eight primes (if they exist) in the interval $(\frac{x}{2},x)$.  
            We have the sharp bound\label{p:C1}
            \begin{equation}\label{eq:Mod30a}
                F_1(x) = \sum_{1 \leq n \leq 8}  \frac{1}{  x-p_{-n} } \leq  C;
            \end{equation}
            see Figure \ref{Figure:C1}.
            The corresponding summand in \eqref{eq:Mod30a} is zero if $p_{-n}$ does not exist.  
        
        \item Let $x<p_1 <p_2<\cdots < p_8$ denote the smallest eight primes (if they exist)
            in the interval $(x,\frac{3x}{2})$.  We have the sharp bound\label{p:C2}
            \begin{equation}\label{eq:Mod30b}
                F_2(x) = \sum_{1 \leq n \leq 8} \frac{1}{p_n -x} \leq  C;
            \end{equation}
		see Figure \ref{Figure:C2}.
            The corresponding summand in \eqref{eq:Mod30b} is zero if $p_{n}$ does not exist.  
    \end{enumerate}
\end{lemma}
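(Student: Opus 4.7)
Both parts are symmetric, so I focus on part (a); part (b) will follow along essentially identical lines. The key structural inputs are: (i) since $x$ is a half-integer, $x - p$ is a half-integer $\geq 1/2$ for every odd prime $p$; and (ii) every prime $p \geq 7$ lies in one of the eight reduced residue classes
\[
R = \{1, 7, 11, 13, 17, 19, 23, 29\} \pmod{30},
\]
with two primes in the same class differing by a multiple of $30$.

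First, I would dispose of the range $1 < x \leq 10$ --- the only values for which $2$, $3$, or $5$ can appear in $(x/2, x)$ --- by a direct check at the half-integers $\{3/2, 5/2, \ldots, 19/2\}$, each of which yields $F_1(x) \leq C$. For $x > 10$, every prime in $(x/2, x)$ is $\geq 7$ and therefore lies in some class $r \in R$. Let $q_r(x)$ denote the largest integer $\leq x$ with $q_r(x) \equiv r \pmod{30}$, and set $d_r(x) := x - q_r(x) \in (0, 30)$; this is a half-integer depending only on $x \pmod{30}$. If $p \in (x/2, x)$ is a prime with $p \equiv r \pmod{30}$ that is the $k$-th largest prime in its residue class, then $p \leq q_r(x) - 30(k-1)$, so
\[
\frac{1}{x-p} \leq \frac{1}{d_r(x) + 30(k-1)}.
\]

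Since $d_r(x) < 30$, every $1/d_r(x) > 1/30$, whereas $1/(d_r(x) + 30(k-1)) < 1/30$ for each $k \geq 2$. Because $F_1(x)$ is a sum of at most $8$ reciprocals drawn (at distinct $(r,k)$ indices) from the multiset
\[
\bigl\{1/(d_r(x) + 30(k-1)) : r \in R,\ k \geq 1\bigr\},
\]
it is dominated by the sum of its $8$ largest elements, which are precisely $\{1/d_r(x) : r \in R\}$. Thus
\[
F_1(x) \leq \sum_{r \in R} \frac{1}{d_r(x)},
\]
and the right-hand side depends only on $x \pmod{30}$, reducing the claim to a finite check over the $30$ half-integer residues. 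I expect the maximum to be attained at $x \equiv 39/2 \pmod{30}$, where the multiset of distances is $\{1/2, 5/2, 13/2, 17/2, 25/2, 37/2, 41/2, 53/2\}$ and the sum evaluates to $2(1 + \tfrac{1}{5} + \tfrac{1}{13} + \tfrac{1}{17} + \tfrac{1}{25} + \tfrac{1}{37} + \tfrac{1}{41} + \tfrac{1}{53}) = C$. For part (b), the analogous argument uses $q_r'(x)$, the smallest integer $\geq x$ with $q_r'(x) \equiv r \pmod{30}$; the identity $d_r'(x) = 30 - d_r(x)$ sends the maximum to the complementary residue $x \equiv 21/2 \pmod{30}$, yielding the same constant $C$.

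The main obstacle is justifying the multiset bound $F_1(x) \leq \sum_{r \in R} 1/d_r(x)$ when some $q_r(x)$ is composite or lies outside $(x/2, x)$: in either case the corresponding first-layer slot $1/d_r(x)$ simply overestimates --- it either has no prime to bound, or bounds a genuine prime further down the residue tower --- so the extremal argument continues to apply without modification.
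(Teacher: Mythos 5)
Your proposal is correct and follows essentially the same route as the paper: both reduce to the fact that all primes $>5$ fall in the eight reduced classes mod $30$, identify the extremal configuration $\lfloor x\rfloor - \{0,2,6,8,12,18,20,26\}$ with $\lfloor x\rfloor\equiv 19\pmod{30}$ (distances $\tfrac12,\tfrac52,\tfrac{13}{2},\tfrac{17}{2},\tfrac{25}{2},\tfrac{37}{2},\tfrac{41}{2},\tfrac{53}{2}$, giving exactly $C$), and handle small $x$ by direct check. If anything, your rank-in-class/multiset dominance step, giving $F_1(x)\le\sum_{r\in R}1/d_r(x)$ and hence a clean $30$-case verification, is a tidier justification than the paper's ``finitely many patterns modulo $30$, computation confirms'' argument; the only item you leave untouched is the \emph{sharpness} assertion, which the paper settles by noting the extremal octuplet pattern actually occurs (first at $x=88819.5$ for (a) and $x=15760090.5$ for (b)).
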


\begin{figure}
    \centering
    \begin{subfigure}[t]{0.47\textwidth}
    \centering
    \includegraphics[width=0.9\textwidth]{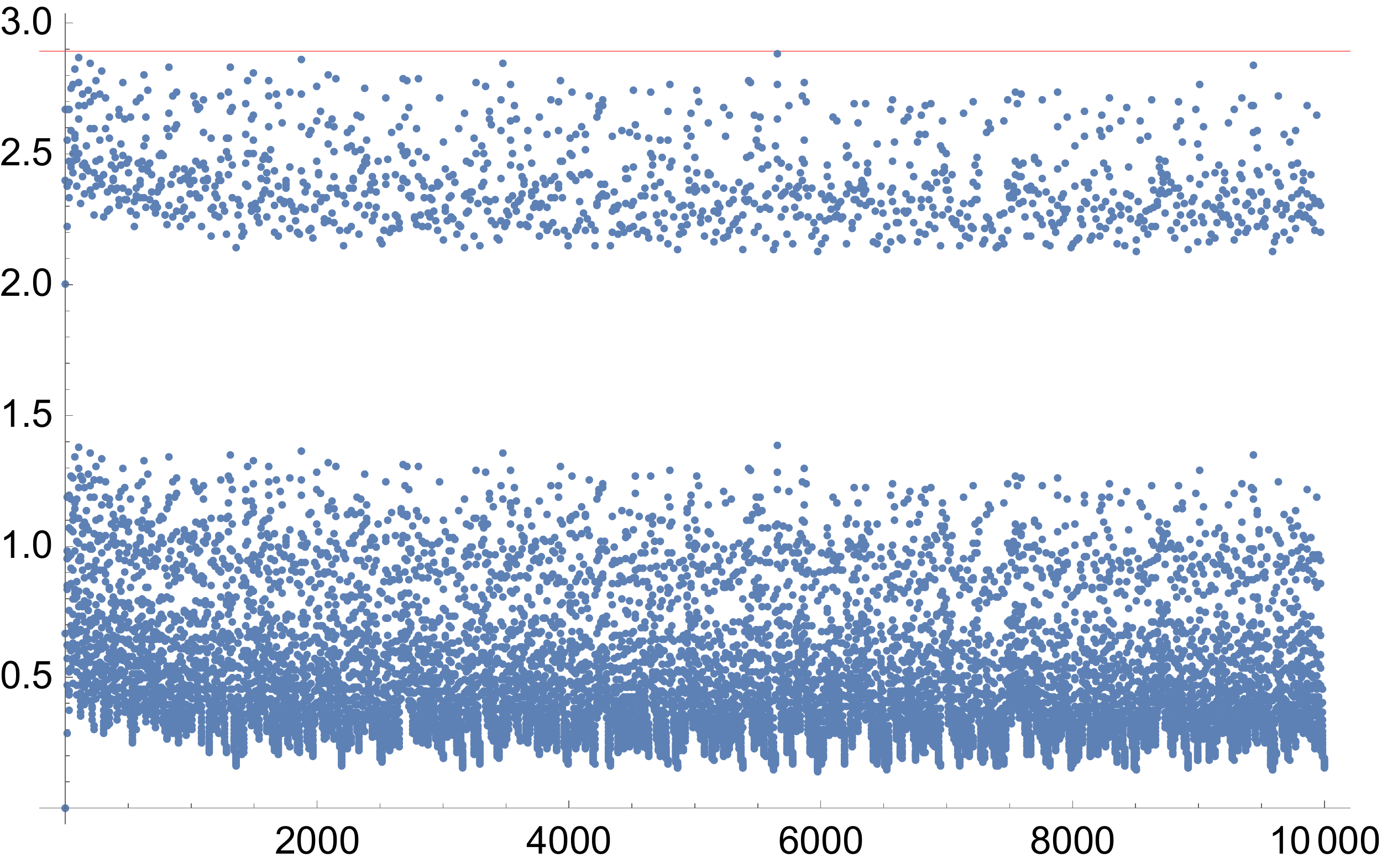}
    \caption{$F_1(x)$ is bounded above by $C$ (red).}
    \label{Figure:C1}
    \end{subfigure}
    \quad
    \begin{subfigure}[t]{0.47\textwidth}
    \centering
    \includegraphics[width=0.9\textwidth]{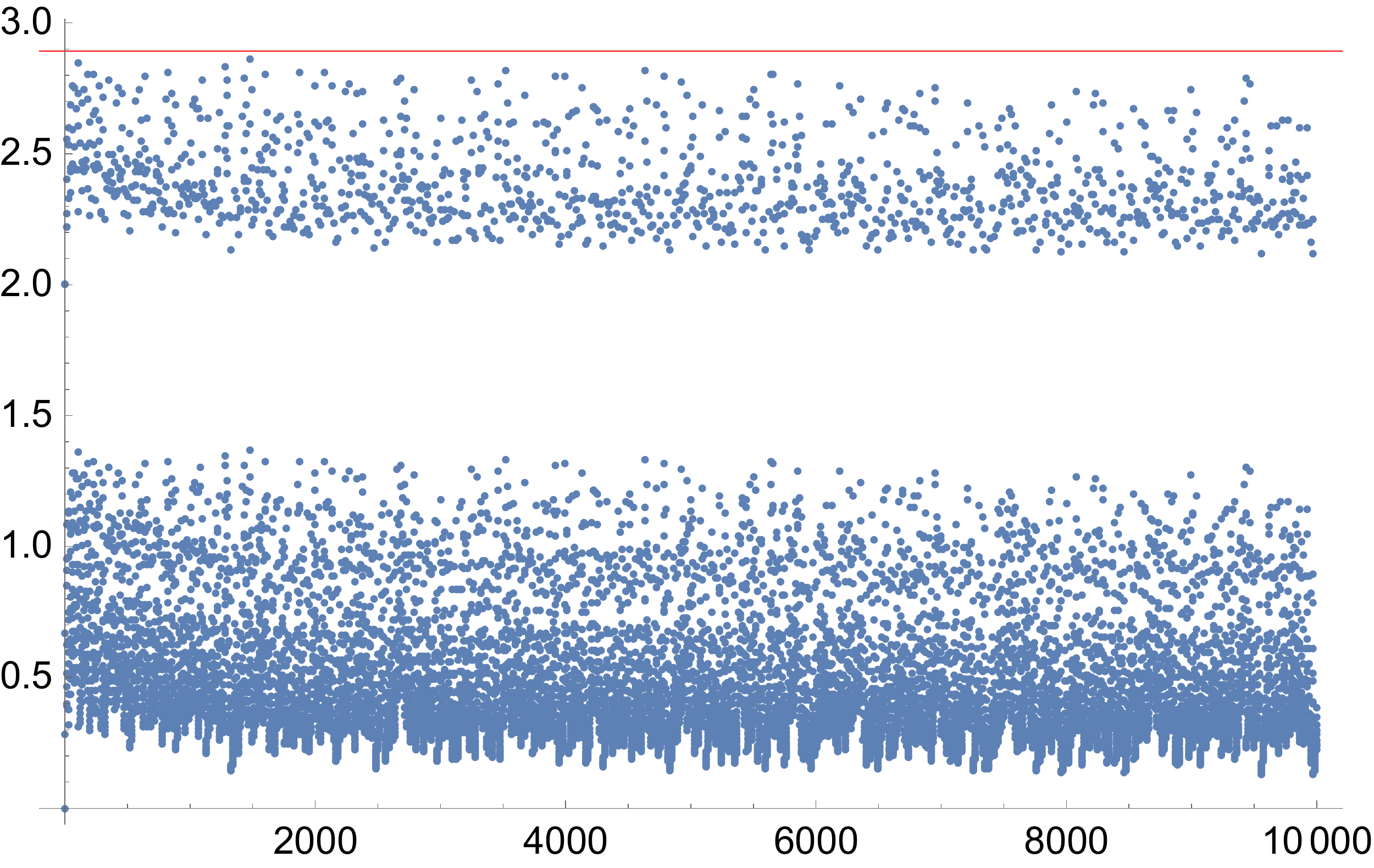}
    \caption{$F_2(x)$ is bounded above by $C$ (red).}
    \label{Figure:C2}
    \end{subfigure}
    \caption{The functions $F_1(x)$ and $F_2(x)$ behave erratically.}
    \label{Figure:C1C2}
\end{figure}

\vspace{-10pt}
\begin{proof}
    (a) If $x \geq 10.5$, then $2,3,5 \notin (\frac{x}{2},x)$.  Computation confirms that
    $F_1(x) \leq F_1(3.5) = \frac{8}{3} =2.66\ldots$ for $x \leq 9.5$.  Let $x \geq 10.5$.
    Then any prime in $(\frac{x}{2},x)$ is 
    congruent to one of $1, 7, 11, 13, 17, 19, 23, 29 \pmod{30}$.  
    There are finitely many patterns modulo $30$ that the $p_{-8},p_{-7},\ldots,p_{-1}$ may assume.
    Among these, computation confirms that $F_1(x)$ is maximized if 
    \begin{align*}
        p_{-1} &= \floor{x} \equiv 19\pmod{30},  & p_{-5} &= \floor{x}-12 \equiv 7\pmod{30},\\
        p_{-2} &= \floor{x}-2 \equiv 17\pmod{30},  & p_{-6} &= \floor{x}-18 \equiv 1\pmod{30},\\
        p_{-3} &= \floor{x}-6 \equiv 13\pmod{30},  & p_{-7} &= \floor{x}-20 \equiv 29\pmod{30},\\
        p_{-4} &= \floor{x}-8 \equiv 11\pmod{30},  & p_{-8} &= \floor{x}-26 \equiv 23\pmod{30},
    \end{align*}
    which yields the desired upper bound $C$.
    This prime pattern first occurs for $x = 88819.5$;
    see \url{https://oeis.org/A022013}.
        
    \medskip\noindent(b) If $x \geq 5.5$, then $2,3,5 \notin (x, \frac{3x}{2})$.  Observe that
    $F_2(x) \leq 2$ for $x \leq 4.5$ (attained at $x=1.5, 2.5, 4.5$).  Let $x \geq 5.5$.
    As in (a), any prime in $(x,\frac{3x}{2})$ is 
    congruent to one of $1, 7, 11, 13, 17, 19, 23, 29 \pmod{30}$.  
    Then $F_2(x)$ is maximized if 
    \begin{align*}
        p_{1} &= \ceiling{x} \equiv 11\pmod{30},  & p_{5} &= \ceiling{x}+12 \equiv 23\pmod{30},\\
        p_{2} &= \ceiling{x}+2 \equiv 13\pmod{30},  & p_{6} &= \ceiling{x}+18 \equiv 29\pmod{30},\\
        p_{3} &= \ceiling{x}+6 \equiv 17\pmod{30},  & p_{7} &= \ceiling{x}+20 \equiv 1\pmod{30}, \\
        p_{4} &= \ceiling{x}+8 \equiv 19\pmod{30}, & p_8 &= \ceiling{x} + 26 \equiv 7 \pmod{30}.
    \end{align*}
    which yields the desired upper bound $C$.
    Although this prime pattern occurs for $x=10.5$, not all eight primes lie in $(x,\frac{3}{2}x)$.    
    The first admissible value is $x = 15760090.5$; see \url{https://oeis.org/A022011}.
\end{proof}

We also need an elementary estimate on $k$th powers in intervals.

\begin{lemma}\label{Lemma:Powers}
Let $X > 1$ be a noninteger, $h>1$, and $k\geq 2$. 
\begin{enumerate}
\item There are at most $N_k + 1$ perfect $k$th powers in $[X,X+h)$, in which $N_k \leq \frac{h}{k \sqrt{X}}$.

\item The shortest gap between $k$th powers in $[X,X+h)$ (if they exist) is $G_k \geq k \sqrt{X}$.
\end{enumerate}
\end{lemma}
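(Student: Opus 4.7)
My plan is to establish part (b) first and then deduce part (a) by a telescoping argument. For (b), suppose $m^k$ and $(m+1)^k$ are consecutive $k$th powers lying in $[X,X+h)$. The mean value theorem applied to $t\mapsto t^k$ on $[m,m+1]$ (equivalently, the binomial expansion) yields $(m+1)^k - m^k \geq k\, m^{k-1}$. Since $m^k \geq X$ we have $m \geq X^{1/k}$, and hence $m^{k-1} \geq X^{(k-1)/k}$. The hypothesis $k\geq 2$ gives $(k-1)/k \geq 1/2$, and combined with $X>1$ this upgrades to $m^{k-1} \geq X^{1/2} = \sqrt{X}$, producing the bound $G_k \geq k\sqrt{X}$.

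For (a), let $M$ denote the number of perfect $k$th powers in $[X,X+h)$. If $M\leq 1$ we may take $N_k=0$ and the bound holds trivially. Otherwise list the powers in increasing order as $m_1^k < m_2^k < \cdots < m_M^k$; since $m_1^k \geq X$ and $m_M^k < X+h$ we have $m_M^k - m_1^k < h$. Writing this difference as a telescoping sum of the $M-1$ consecutive gaps and applying (b) to each gives $(M-1)\, k\sqrt{X} \leq m_M^k - m_1^k < h$. Setting $N_k = M-1$ yields $N_k \leq h/(k\sqrt{X})$, and the total count in the interval is $N_k+1 = M$, as required.

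The argument is elementary and I do not anticipate any genuine obstacle. The only slightly delicate point is the exponent comparison $(k-1)/k \geq 1/2$, which is sharp exactly at $k=2$ and is what forces the hypothesis $k\geq 2$ into the statement. The noninteger hypothesis on $X$ plays no role in my plan; it appears to be a standing convention of the paper that rules out the degenerate case where $X$ itself equals a $k$th power.
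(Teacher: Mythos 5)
Your proof is correct and follows essentially the same route as the paper's: both obtain the gap bound from $(m+1)^k - m^k \geq k m^{k-1} \geq k X^{(k-1)/k} \geq k\sqrt{X}$ (using $k\geq 2$ and $X>1$), and then deduce the count by comparing the sum of the gaps with the interval length $h$. The only cosmetic difference is that the paper identifies the minimal gap as the first one, $g_1$, while you bound every gap directly before telescoping; the substance is identical.
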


\begin{proof}
We may assume that $X$ is so large that $N_k \geq 1$.
Let $m = \ceiling{X^{\frac{1}{k}}}$ so that $m^k$ is the first $k$th power larger than $X$.
Consider the gaps $g_1,g_2,\ldots,g_{N_k}$ between the $N_k$ consecutive $k$th powers in $[X,X+h)$; see Figure \ref{Figure:UpperBoundN}. 
Then
\begin{equation*}
    G_k = \min\{ g_1,g_2,\ldots,g_{N_k}\}
	= g_1 = (m+1)^k - m^k 
	\geq km^{k-1}
	\geq k X^{\frac{k-1}{k}}
	\geq k \sqrt{X}.
\end{equation*}
The desired inequality follows since $N_k G_k \leq g_1 + g_2+ \cdots + g_{N_k} \leq h$.
\end{proof}

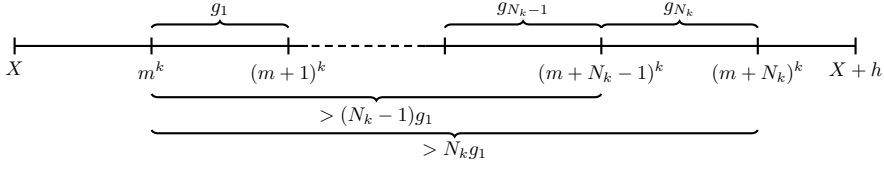
\begin{figure}
    \centering
    \begin{tikzpicture}[thick,scale=0.8,every node/.style={scale=0.75},xscale=-1.3]
    \draw(-10.75,0)--(-5.25,0);
    \draw[densely dashed](-5.25,0)--(-3.75,0);
    \draw(-3.75,0)--(0,0);
    \foreach \x/\xtext/\col in {-10.75/$X+h$/black, -9.5/$(m+N_k)^k$/black, -7.5/$(m+N_k-1)^k$/black, -5.5//black, -3.5/$(m+1)^k$/black, -1.75/$m^k$/black, 0/$X$/black}
        \draw[\col](\x,4pt)--(\x,-4pt) node[below] {\xtext};
    \draw[decorate, decoration={brace}, yshift=2ex]  (-7.5,0) -- node[above=0.4ex] {$g_{N_k}$}  (-9.5,0);
    \draw[decorate, decoration={brace}, yshift=2ex]  (-5.5,0) -- node[above=0.4ex] {$g_{N_k-1}$}  (-7.5,0);
    \draw[decorate, decoration={brace}, yshift=2ex]  (-1.75,0) -- node[above=0.4ex] {$g_{1}$}  (-3.5,0);
    \draw[decorate, decoration={brace}, yshift=2ex]  (-7.5,-1.1) -- node[below=0.4ex] {$>(N_k -1) g_1$}  (-1.75,-1.1);
    \draw[decorate, decoration={brace}, yshift=2ex]  (-9.5,-1.7) -- node[below=0.4ex] {$>N_k g_1$}  (-1.75,-1.7);
    \end{tikzpicture}
    \caption{Proof of Lemma \ref{Lemma:Powers}.}
    \label{Figure:UpperBoundN}
\end{figure}

Finally, we need an estimate on
the $n$th \emph{harmonic number} 
$H_n = \sum_{j=1}^n \frac{1}{j}$:
\begin{equation}\label{eq:TothMare}\qquad
    \frac{1}{2n+ \frac{2}{5}} < H_n - \log n - \gamma < \frac{1}{2n + \frac{1}{3}} \leq \frac{3}{7}
    \quad\text{for}\quad 
    n \geq 1,
\end{equation}
in which $\gamma$ is the Euler--Mascheroni constant \cite{TothMare}.   We require the upper bound
\begin{align}
\sum_{\ell=1}^n \frac{1}{2\ell - 1}
&= \Big(1 + \frac{1}{2} + \frac{1}{3} + \cdots + \frac{1}{2n}\Big) - \Big( \frac{1}{2} + \frac{1}{4} + \cdots + \frac{1}{2n} \Big)
= H_{2n} - \frac{1}{2} H_n  \nonumber \\
&\leq \bigg(\log 2n +\gamma + \frac{1}{4n + \frac{1}{3}} \bigg) - \frac{1}{2} \bigg( \log n + \gamma + \frac{1}{2n+\frac{2}{5}} \bigg) \nonumber \\
&\leq \frac{1}{2}\log n + \frac{1}{2}\gamma + \log 2 + \frac{1}{4n + \frac{1}{3}}- \frac{1}{4n+\frac{4}{5}} \nonumber \\
&= \frac{1}{2}\log n + \frac{1}{2}\gamma + \log 2 +  \frac{7}{240 n^2+68 n+4} \nonumber\\
&\leq \frac{1}{2}\log n + \frac{1}{2}\gamma + \log 2 +  \frac{7}{312}
\quad \text{for $n \geq 1$}. \label{eq:HarmSum}
\end{align}

\section{Proof of Theorem \ref{Theorem:LogLog}}\label{Section:Proof}

In what follows, $x\in \N+ \frac{1}{2} = \{ \frac{3}{2}, \frac{5}{2}, \frac{7}{2},\ldots\}$ and $c= \frac{1}{\log x}$.
Minor improvements below are possible; these were eschewed in favor of
a final estimate of simple shape.

\subsection{When $n$ is very far from $x$}\label{Subsection:Far}
Suppose that $n \leq \frac{x}{2}$ or $\frac{3x}{2} \geq n$. 
Then $\log \frac{x}{n} \leq -\log \frac{3}{2}$ or $\log \frac{3}{2} <\log 2 \leq \log \frac{x}{n}$, so $|\log \frac{x}{n}| \geq \log \frac{3}{2}$.
If $T \geq (\log \frac{3}{2})^{-1} > 2.46$, then
\begin{equation*}
    \min\Big\{ 1 , \frac{1}{T | \log \frac{x}{n}| } \Big\}  
    \leq \frac{1}{T  |\log \frac{x}{n}|} 
    \leq \frac{1}{T  \log \frac{3}{2}}.
\end{equation*}
For such $T$, the previous inequality and Lemma \ref{Lemma:Stieltjes} imply (recall that $c = \frac{1}{\log x}$)
\begin{align}
    \sum_{\substack{n \leq \frac{x}{2}\, \text{or}\\ n \geq  \frac{3x}{2}}}  \Big( \frac{x}{n}\Big)^c  \frac{ \Lambda(n) }{n \log n} \min\Big\{ 1 , \frac{1}{T | \log \frac{x}{n} |} \Big\}
    &\leq x^c \sum_{\substack{n \leq \frac{x}{2}\, \text{or}\\ n \geq  \frac{3x}{2} }} \frac{1}{n^{1+c}}\bigg( \frac{\Lambda(n)}{\log n}\bigg) \bigg(\frac{1}{T  \log \frac{3}{2} } \bigg)  \nonumber \\
    &\leq \frac{ x^c}{T\log \frac{3}{2}}  \log \zeta(1+c) \nonumber \\
    &\leq \frac{ x^c}{T \log \frac{3}{2}} ( - \log c + \gamma c) \nonumber \\
    &= \frac{e}{T \log \frac{3}{2}}\bigg(\log \log x + \frac{\gamma}{\log x} \bigg).
    \label{eq:VeryFar}
\end{align}

\subsection{Reduction to a sum over prime powers}\label{Subsection:ReducePP}
Suppose that $\frac{x}{2} < n < \frac{3x}{2}$.
Let $z = 1-\frac{n}{x}$ and observe that $|z| < \frac{1}{2}$.  Then 
\begin{equation*}
\log \frac{x}{n} = - \log(1-z) 
= z \left(- \frac{\log(1-z)}{z} \right),
\end{equation*}
in which the function in parentheses is positive and achieves its minimum value 
$2 \log \frac{3}{2} = 0.81093\ldots$ on $|z| < \frac{1}{2}$ at its left endpoint $-\frac{1}{2}$; see Figure \ref{Figure:LogFunctionMin}.
Then
\begin{equation}\label{eq:LogThingZed}
    | \log(1-z) | > \bigg(2 \log \frac{3}{2} \bigg) |z|
    \quad\text{for}\quad 
    |z| < \frac{1}{2},
\end{equation}
whose validity is illustrated in Figure \ref{Figure:LogFunctionWorks}.
     \begin{figure}
        \centering
        \begin{subfigure}[t]{0.47\textwidth}
        \centering
        \includegraphics[width=\textwidth]{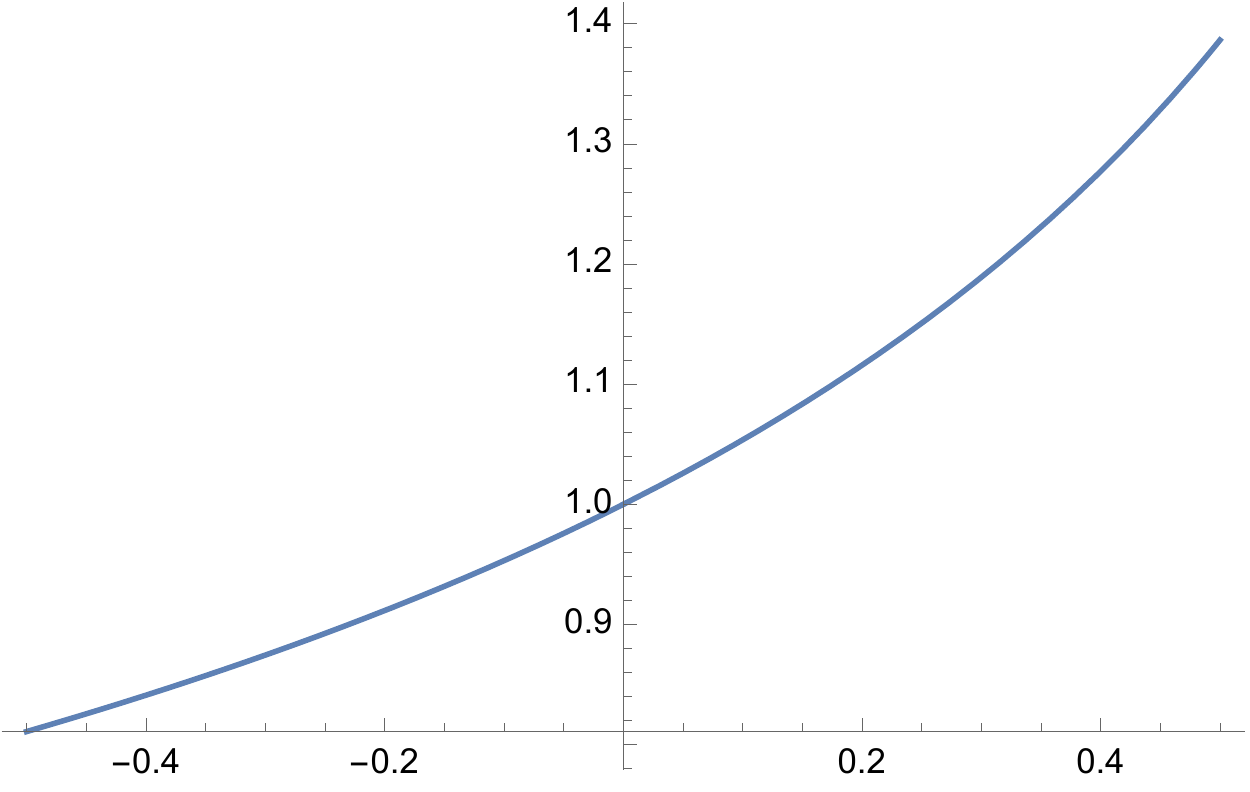}
        \caption{$- \frac{\log(1-z)}{z}$ on $|z| \leq \frac{1}{2}$.}
        \label{Figure:LogFunctionMin}
        \end{subfigure}    
        \hfill
        \begin{subfigure}[t]{0.47\textwidth}
        \centering
        \includegraphics[width=\textwidth]{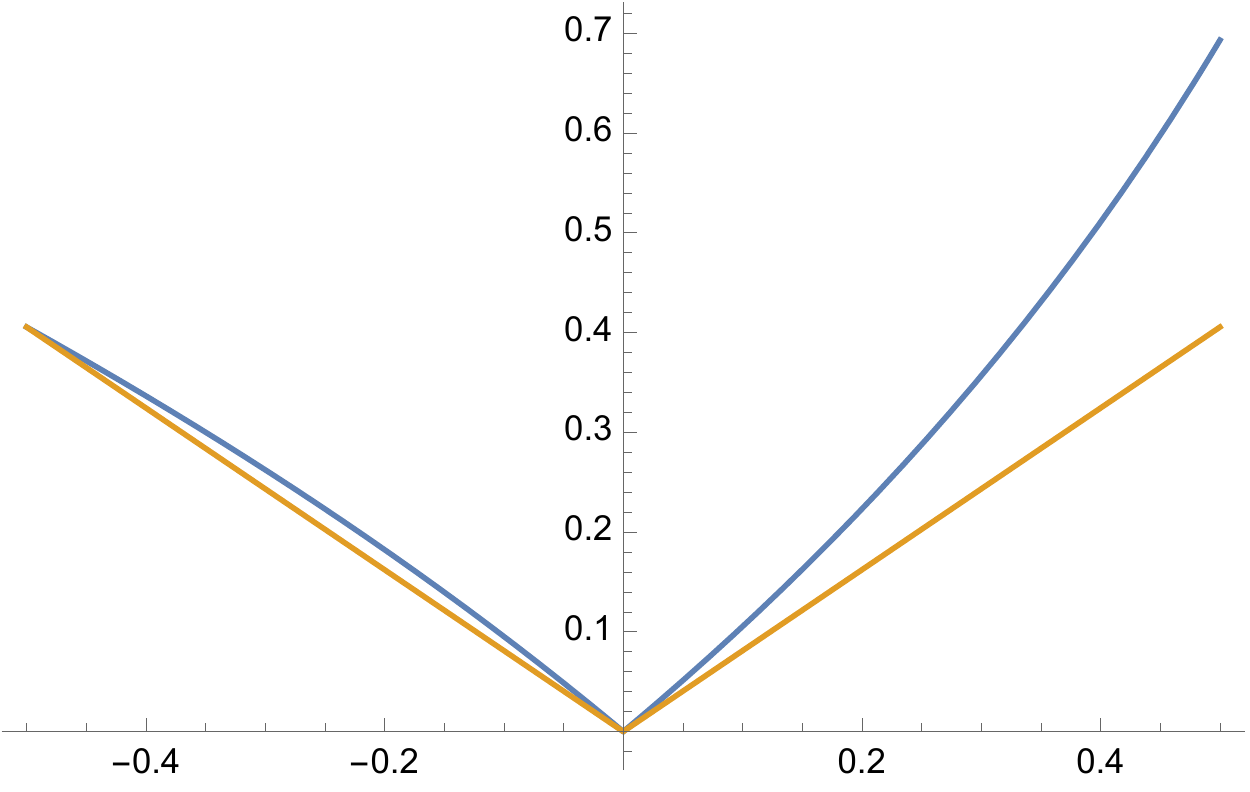}
        \caption{$|\log(1-z)|$ (blue) and $(2 \log \frac{3}{2}) |z|$ (gold) on $|z| \leq \frac{1}{2}$.}
        \label{Figure:LogFunctionWorks}
        \end{subfigure}    
        \caption{Graphs relevant to the derivation of \eqref{eq:LogThingZed}.}
    \end{figure}
Therefore,
\begin{equation}\label{eq:LogFunctionEstimateThing}
    \bigg| \log \frac{x}{n} \bigg| > \bigg( 2 \log \frac{3}{2}\bigg) \bigg|1 - \frac{n}{x}\bigg| 
    \quad\text{for}\quad
    \frac{x}{2} < n <  \frac{3x}{2},
\end{equation}
and hence
\begin{align}
& \sum_{\frac{x}{2} < n < \frac{3x}{2}}  \Big( \frac{x}{n}\Big)^c  \frac{ \Lambda(n) }{n \log n}
\min\Big\{ 1 , \frac{1}{T | \log \frac{x}{n} |} \Big\}  \nonumber \\
&\qquad\leq  \frac{x^c}{T}  \sum_{ \frac{x}{2} < n < \frac{3x}{2}}   \bigg( \frac{ \Lambda(n) }{n^{1+c} \log n}\bigg)
\bigg( \frac{1}{ | \log \frac{x}{n} |} \bigg) \nonumber \\
&\qquad\leq  \frac{x^c}{T} \sum_{\frac{x}{2} < n < \frac{3x}{2}} \bigg( \frac{ \Lambda(n) }{n^{1+c} \log n}\bigg)  \frac{1}{ (2\log \frac{3}{2}) | 1 - \frac{n}{x} |} 
&& (\text{by \eqref{eq:LogFunctionEstimateThing}}) \nonumber \\
&\qquad\leq \frac{ x^c}{T (2\log\frac{3}{2}) } \sum_{\frac{x}{2} < n < \frac{3x}{2}}  \frac{2}{x} \bigg( \frac{ \Lambda(n) }{n^c \log n}\bigg)\frac{1}{ |  1 - \frac{n}{x} |} 
&& (\text{since $\tfrac{x}{2} < n$}) \nonumber \\
&\qquad\leq \frac{x^c }{T \log \frac{3}{2}} \sum_{\frac{x}{2} < n < \frac{3x}{2}}  \bigg( \frac{ \Lambda(n) }{n^c \log n}\bigg)\frac{1}{ |  x-n |} 
\nonumber \\
&\qquad\leq \frac{x^c }{T \log\frac{3}{2}} \sum_{\frac{x}{2} < p^k < \frac{3 x}{2}}  \bigg( \frac{ \log p }{(p^k)^c k\log p}\bigg)\frac{1}{ |  x-p^k |} && (\text{def. of $\Lambda$})\nonumber \\
&\qquad\leq  \frac{e }{T \log\frac{3}{2}} \sum_{\frac{x}{2} < p^k < \frac{3x}{2}}  \frac{1}{k |  x-p^k |}, 
&&(\text{since $c=\tfrac{1}{\log x}$})
\label{eq:FirstPP}
\end{align}
in which the final two sums run over all prime powers $p^k$ in the stated interval.  

The remainder of the proof uses ideas from \cite[Lem.~2]{Goldston83} to estimate
\begin{equation}\label{eq:Trouble}
\sum_{\frac{x}{2} < p^k < \frac{3x}{2}}  \frac{1}{ k | x-p^k |}
= \underbrace{\sum_{\frac{x}{2} < p < \frac{3x}{2}}  \frac{1}{ | x-p |} }_{S_{\mathrm{prime}}(x)}
+ \underbrace{ \sum_{ \substack{\frac{x}{2} < p^k < \frac{3x}{2}\\ k\geq 2}}  \frac{1}{ k | x-p^k |} }_{S_{\mathrm{power}}(x)} .
\end{equation}


\subsection{The sum over primes}\label{Subsection:PrimeSum}
First observe that
\begin{equation*}
    S_{\mathrm{prime}}(x)
    \leq \underbrace{\sum_{\frac{x}{2} < p < x}  \frac{1}{  x-p } }_{S_{\mathrm{prime}}^-(x)}
    + \underbrace{\sum_{x < p < \frac{3x}{2}}  \frac{1}{ p-x}  }_{S_{\mathrm{prime}}^+(x)}.
\end{equation*}
We require the Brun--Titchmarsh theorem (see Montgomery--Vaughan \cite[Cor.~2]{MontgomeryVaughan}):
\begin{equation}\label{eq:Monty}
    \pi(X+Y) - \pi(X) \leq \frac{2Y}{\log{Y}}\quad\text{where $\pi(x) = \sum_{p\leq x}1$, $X > 0$, and $Y > 1$}.
\end{equation}

\subsubsection{The lower sum over primes}

Let $p_{-k} < p_{-(k-1)} < \cdots < p_{-2} < p_{-1}$
be the primes in $(\frac{x}{2}, x)$; note that $k \leq \frac{x}{2}$.
Apply \eqref{eq:Monty} with $X = x-y_n$ and $Y = y_n$ to get
\begin{equation*}\qquad
0 \leq \pi(x) - \pi(x-y_n) \leq \frac{2y_n}{\log y_n} = n \qquad \text{for $6 \leq n \leq k$}
\end{equation*}
by Lemma \ref{Lemma:Whine}, so $(x-y_n,x]$ contains at most $n$ primes.
Thus, $p_{-(n+1)} \leq x - y_n$ and
\begin{equation}\label{eq:xpy1}
\frac{1}{x-p_{-(n+1)}} \leq \frac{1}{y_n} \qquad \text{for $6 \leq n \leq k-1$}.
\end{equation}
Then Lemma \ref{Lemma:Whine}, which requires $k \geq 8$, and the integral test provide
\begin{align*}
\sum_{ \frac{x}{2} < p < x}  \frac{1}{  x-p }
&=\sum_{ 1 \leq n\leq 8} \frac{1}{  x-p_{-n} } + \sum_{9 \leq n \leq k}  \frac{1}{  x-p_{-n} } \\
&\leq F_1(x) + \sum_{8\leq n \leq k-1} \frac{1}{y_n} && (\text{by \eqref{eq:Mod30a} and \eqref{eq:xpy1}})\\
&\leq C + 2 \sum_{7 < n \leq \frac{x}{2}} \frac{1}{n \log n} && (\text{by Lemma \ref{Lemma:Whine}})\\
&\leq C +  2 \log \log x,
\end{align*}
which is valid for $k \leq 7$ since Lemma \ref{Lemma:Sieve30}a shows that the sum is majorized by $C$.

\subsubsection{The upper sum over primes}
Let $p_1<p_2< \cdots < p_k$ denote the primes in $(x, \frac{3x}{2})$ and note that $k \leq \frac{x}{2}$.
Then \eqref{eq:Monty} with $X = x$ and $Y = y_n$ ensures that
\begin{equation*}
0 \leq \pi(x+y_n) - \pi(x) \leq \frac{2y_n}{\log y_n} = n \qquad \text{for $6 \leq n \leq k$}
\end{equation*}
by Lemma \ref{Lemma:Whine}, so $(x, x+y_n]$ contains at most $n$ primes.  Thus,
$p_{n+1} \geq x + y_n$ and 
\begin{equation}\label{eq:xpy2}
\frac{1}{p_{n+1} - x} \leq \frac{1}{y_n} \qquad \text{for $6 \leq n \leq k$}.
\end{equation}
An argument similar to that above reveals that
\begin{equation*}
\sum_{x < p < \frac{3x}{2} } \frac{1}{p-x}
\leq \sum_{1\leq n \leq 8} \frac{1}{p_n -x} + \sum_{9\leq n \leq k} \frac{1}{p_n-x }
 \leq C +2 \log\log x.
\end{equation*}

\subsubsection{Final bound over primes}
For $x \in \N + \frac{1}{2}$, the previous inequalities yield
\begin{equation}\label{eq:PrimeBoundFinal}
S_{\mathrm{prime}}(x)
= \sum_{ \frac{x}{2} < p < \frac{3x}{2}}  \frac{1}{ | x-p |} 
\leq   2C +4 \log\log x.
\end{equation}

\subsection{The sum over prime powers}\label{Subsection:SumPP}
We now majorize
\begin{equation*}
S_{\mathrm{power}}(x) = \sum_{ \substack{ \frac{x}{2} < p^k <\frac{3x}{2} \\ k\geq 2}}  \frac{1}{ k | x-p^k |}.
\end{equation*}

\subsubsection{Initial reduction}
To bound $S_{\mathrm{power}}(x)$ it suffices to majorize
\begin{equation}\label{eq:SumEverything}
        S_{\mathrm{sqf}}(x) = \sum_{ \substack{ \frac{x}{2} < n^k <\frac{3x}{2}  \\ k\geq 2 \\ \text{$n \geq 2$ sq.~free} }}  \frac{1}{ k | x-n^k |},
\end{equation}
in which the prime powers $p^k$ are replaced with the powers $n^k$ of square free $n \geq 2$.  
The square-free restriction ensures that powers such as $2^6 = (2^2)^3 = (2^3)^2$ are not counted multiple times in \eqref{eq:SumEverything}. If
$\frac{x}{2} < n^k < \frac{3x}{2}$
and $k \geq 2$, then (since $n \geq 2$)
\begin{equation}\label{eq:BoundK}
k \leq  \frac{\log \frac{3x}{2} }{\log 2}  \leq \floor{2.4 \log x} \qquad \text{for $x \geq 3.5$}.
\end{equation}

\subsubsection{Nearest-power sets.}
The largest contributions to $S_{\mathrm{sqf}}(x)$ come from the powers closest to $x$.
We handle those summands separately and split the sum \eqref{eq:SumEverything} accordingly. 
For each $k \geq 2$, the inequalities
$\floor{ x^{\frac{1}{k}}}^k < x <  \ceiling{ x^{\frac{1}{k}}}^k$
exhibit the two $k$th powers nearest to $x$.  Define
\begin{equation}\label{eq:NNk}
    \NN_k \subseteq \big\{ \floor{ x^{\frac{1}{k}}}^k ,  \ceiling{ x^{\frac{1}{k}}}^k \big\}
\end{equation}
according to the following rules:
\begin{itemize}
    \item $\NN_k$ contains $\floor{ x^{\frac{1}{k}}}^k$ if it is square free and belongs to $( \frac{x}{2}, \frac{3x}{2} )$, and
    \item $\NN_k$ contains $\ceiling{ x^{\frac{1}{k}}}^k$ if it is square free and belongs to $( \frac{x}{2},\frac{3x}{2} )$. 
\end{itemize}
Consequently, $\NN_k$, if nonempty, contains only powers that satisfy the restrictions in \eqref{eq:SumEverything}.
The square-free condition ensures that $\NN_j \cap \NN_k = \varnothing$ for $j \neq k$.

Write $S_{\mathrm{sqf}}(x) = S_{\text{near}}(x) + S_{\text{far}}(x)$, in which
\begin{equation}\label{eq:SDef}
    S_{\text{near}}(x)
    \,\,= \!\!\!\! \sum_{ \substack{ \frac{x}{2} < n^k < \frac{3x}{2} \\ k \geq 2 \\ \text{$n \geq 2$ sq.~free} \\ n^k \in \NN_k}}  \frac{1}{ k | x-n^k |}
    \quad \text{and} \quad
    S_{\text{far}}(x)
    \,\,= \!\!\!\! \sum_{ \substack{ \frac{x}{2} < n^k < \frac{3x}{2} \\ k \geq 2 \\ \text{$n \geq 2$ sq.~free} \\ n^k \notin \NN_k}}  \frac{1}{ k | x-n^k |}.
    \end{equation}

\subsubsection{Near Sum.}  
For $x\geq 3.5$, a nearest-neighbor overestimate provides
\begin{align}
    S_{\text{near}}(x)
    &= \sum_{ \substack{ \frac{x}{2} < n^k < \frac{3x}{2} \\ k \geq 2 \\ \text{$n \geq 2$ sq.~free} \\ n^k \in \NN_k}}  \frac{1}{ k | x-n^k |}  && (\text{by \eqref{eq:SDef}}) \nonumber\\
    &= \sum_{k=2}^{\floor{ 2.4\log x} } \sum_{  m \in \NN_k }   \frac{1}{ k | x-m |}  &&  (\text{by \eqref{eq:BoundK}}) \nonumber\\
    &\leq \frac{1}{2} \sum_{k=2}^{\floor{ 2.4\log x} }  \sum_{  m \in \NN_k }   \frac{1}{  | x-m |}  && ( \text{since $k \geq 2$} ) \nonumber\\
    &\leq \frac{1}{2} \sum_{j=0}^{\floor{ 2.4\log x}-2 }   \left( \frac{1}{  x - (\floor{x}-j)}  + \frac{1}{ (\ceiling{x} +j) - x} \right)  && (\text{see below}) \nonumber\\
    &\leq \frac{1}{2} \sum_{\ell=1}^{\floor{ 2.4\log x}-1 }  \frac{2}{\ell - \frac{1}{2}} 
    \quad<\quad 2 \!\!\!\!\sum_{\ell=1}^{\floor{ 2.4\log x} }  \frac{1}{2\ell - 1}  \nonumber\\
    &\leq \log( \floor{2.4\log x}) + \gamma + 2\log 2 + \frac{14}{312}&& ( \text{by \eqref{eq:HarmSum}}) \nonumber\\
    &< \log \log x  + \gamma + 2\log 2  + \log 2.4 +  \frac{7}{156} . \label{eq:Near}
\end{align}
Let us elaborate on a crucial step above.  Consider the at most $\floor{2 \log x} - 1$ pairs of values $|x-m|$ that arise
as $m$ ranges over each $\NN_k$ with $2 \leq k \leq \floor{2 \log x}$ (since $\NN_j(x) \cap \NN_k = \varnothing$ for $j \neq k$, no $m$ appears more than once).  
Replace these values with the absolute deviations of $x$ from its 
$2 \times (\floor{2 \log x} - 1)$ nearest neighbors
$\floor{x}-j$ (to the left) and $\ceiling{x}+j$ (to the right), in which $0 \leq j \leq \floor{2 \log x}-2$.
Since $x \in \N+\frac{1}{2}$, these
deviations are of the form $\ell - \frac{1}{2}$ for $1 \leq \ell \leq \floor{2\log x}-1$.

\subsubsection{Splitting the second sum.} 

From \eqref{eq:SDef}, the second sum in question is
\begin{equation*}
    S_{\text{far}}(x)
    = \!\!\!\! \sum_{ \substack{ \frac{x}{2} < n^k < \frac{3x}{2} \\ k \geq 2 \\ \text{$n \geq 2$ sq.~free} \\ n^k \notin \NN_k}}  \frac{1}{ k | x-n^k |}
    \leq \!\!\!\! \sum_{ \substack{ \frac{x}{2} < n^k < \frac{3x}{2} \\ n,k \geq 2 \\ n^k \notin \NN_k}}  \frac{1}{ k | x-n^k |}
    = S_{\text{far}}^-(x)  + S_{\text{far}}^+(x),
\end{equation*}
in which
\begin{equation}\label{eq:FarSide}
    S_{\text{far}}^-(x)
    =  \sum_{k\geq 2} \sum_{ \substack{ \frac{x}{2} < n^k < x  \\ n \geq 2, n^k \notin \NN_k}}  \frac{1}{ k | x-n^k |}
    \quad \text{and} \quad
    S_{\text{far}}^+(x)
    =  \sum_{k\geq 2}  \sum_{ \substack{ x < n^k < \frac{3x}{2} \\ n \geq 2, n^k \notin \NN_k}}   \frac{1}{ k | x-n^k |}.
\end{equation}

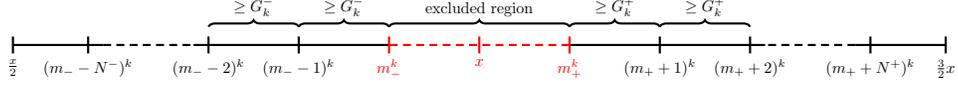
\begin{figure}
    \centering
    \begin{tikzpicture}[thick,scale=0.8,every node/.style={scale=0.6},xscale=-1]
    \draw(-7.75,0)--(-6.25,0);
    \draw[densely dashed](-6.25,0)--(-4.75,0);
    \draw(-4.75,0)--(-1.5,0);
    \draw[thick, red, densely dashed](-1.5,0)--(1.5,0);
    \draw(7.75,0)--(6.25,0);
    \draw[densely dashed](6.25,0)--(4.75,0);
    \draw(4.75,0)--(1.5,0);
    \foreach \x/\xtext/\col in {-7.75/$\frac{3}{2}x$/black, -6.5/$(m_+ +N^+)^k$/black, -4.5/$(m_+ +2)^k$/black, -3/$(m_+ +1)^k$/black, -1.5/$m_+^k$/red, 0/$x$/red,  1.5/$m_-^k$/red, 3/$(m_- -1)^k$/black, 4.5/$(m_- -2)^k$/black, 6.5/$(m_- -N^-)^k$/black, 7.75/$\frac{x}{2}$/black}
        \draw[\col](\x,4pt)--(\x,-4pt) node[below] {\xtext};
    \draw[decorate, decoration={brace}, yshift=2ex]  (1.5,0) -- node[above=0.4ex] {excluded region}  (-1.5,0);
    \draw[decorate, decoration={brace}, yshift=2ex]  (-1.5,0) -- node[above=0.4ex] {$\geq G_k^+$}  (-3,0);
    \draw[decorate, decoration={brace}, yshift=2ex]  (-3,0) -- node[above=0.4ex] {$\geq G_k^+$}  (-4.5,0);
    \draw[decorate, decoration={brace}, yshift=2ex]  (3,0) -- node[above=0.4ex] {$\geq G_k^-$}  (1.5,0);
    \draw[decorate, decoration={brace}, yshift=2ex]  (4.5,0) -- node[above=0.4ex] {$\geq G_k^-$}  (3,0);
    \end{tikzpicture}
    \caption{Analysis of $k$th powers in $[\tfrac{x}{2}, \tfrac{3x}{2}]$, in which 
    $m_- = \floor{x^{1/k}}$ and $m_+ = \ceiling{x^{1/k}}$ are excluded from consideration.  
    There are at most $N_k^-$ admissible $k$th powers in $[\frac{x}{2},x)$, with minimal gap size $G_k^-$,    
     and at most $N_k^+$ admissible $k$th powers in $[x,\frac{3x}{2})$, with minimal gap size $G_k^+$.}
    \label{Figure:Illustration}
\end{figure}
For $k \geq 2$,
Lemma \ref{Lemma:Powers} with $X =h= \frac{x}{2}$,
then with $X = x$ and $h = \frac{x}{2}$, implies that
\begin{equation}\label{eq:GGNN}
    G_k^- \geq \frac{k \sqrt{x}}{\sqrt{2}}, \quad
    N_k^- \leq \frac{\sqrt{x}}{2 \sqrt{2}}, \qquad
    \text{and} \qquad
    G_k^+ \geq k \sqrt{x}, \quad
    N_k^+ \leq \frac{ \sqrt{x}}{4}
\end{equation}
are admissible in Figure \ref{Figure:Illustration}.  For $1\leq j\leq N^-_k$ and $1\leq j\leq N^+_k$, respectively, 
\begin{equation*}
    |x - (m_- - j)^k| \geq \frac{j k \sqrt{x}}{\sqrt{2}} 
    \qquad\text{and}\qquad
    |x - (m_+ + j)^k| \geq j k \sqrt{x}.
\end{equation*}
Let $N_k^{\pm} \geq 1$, since otherwise the corresponding sum estimated below is zero.  Then
\begin{equation}\label{eq:HkMinus}
    \sum_{ \substack{\frac{x}{2} < n^k < x  \\ n \geq 2, n^k \notin \NN_k}}  \frac{1}{ k | x-n^k |}
    = \sum_{j=1}^{N^-_k}  \frac{1}{k |x-(m_- - j)^k|}
    \leq \frac{\sqrt{2}H_{N^-_k} }{k^2 \sqrt{x}}  
\end{equation}
and
\begin{equation}\label{eq:HkPlus}
    \sum_{\substack{x < n^k < \tfrac{3x}{2}  \\ n \geq 2, n^k \notin \NN_k}}  \frac{1}{ k | x-n^k |}
    = \sum_{j=1}^{N^+_k}  \frac{1}{k |x-(m_+ + j)^k|}
    \leq \frac{H_{N^+_k}}{k^2 \sqrt{x}}  .
\end{equation}
Therefore,
\begin{align}
S_{\text{far}}^-(x)
&=\sum_{k\geq 2} \sum_{ \substack{ \frac{x}{2} < n^k < x  \\ n \geq 2, n^k \notin \NN_k}}  \frac{1}{ k | x-n^k |} 
\leq  \frac{\sqrt{2}}{\sqrt{x}} \sum_{k\geq 2}\frac{H_{N^-_k} }{k^2 }  && (\text{by \eqref{eq:FarSide} and \eqref{eq:HkMinus}}) \nonumber\\
&\leq \frac{\sqrt{2}}{\sqrt{x}}   \bigg(\frac{1}{2}\log{x} - \frac{3}{2}\log{2} + \gamma + \frac{3}{7} \bigg) \sum_{k\geq 2}\frac{1}{k^2}
&& (\text{by \eqref{eq:TothMare} and \eqref{eq:GGNN}}) \nonumber \\
&=\frac{\pi^2-6}{3\sqrt{2 x}}  \bigg(\frac{1}{2}\log{x} - \frac{3}{2}\log{2} + \gamma + \frac{3}{7} \bigg) 
&&(\text{since $\zeta(2)-1 = \tfrac{\pi^2-6}{6}$}) \label{eq:FarMinus}
\end{align}
and
\begin{align}
S_{\text{far}}^+(x)
&=\sum_{k\geq 2}  \sum_{ \substack{ x < n^k < \frac{3x}{2} \\ n \geq 2, n^k \notin \NN_k}}   \frac{1}{ k | x-n^k |}
\leq \frac{1}{ \sqrt{x}}  \sum_{k \geq 2} \frac{H_{N^+_k}}{k^2} && (\text{by \eqref{eq:FarSide} and \eqref{eq:HkPlus}}) \nonumber\\
&\leq \frac{1}{ \sqrt{x}}  \bigg(\frac{1}{2}\log{x} - 2\log{2} + \gamma + \frac{3}{7} \bigg) \sum_{k \geq 2} \frac{1}{k^2}
&& (\text{by \eqref{eq:TothMare} and \eqref{eq:GGNN}}) \nonumber \\
&= \frac{\pi^2-6}{6\sqrt{x}}  \bigg(\frac{1}{2}\log{x} - 2\log{2} + \gamma + \frac{3}{7} \bigg) 
&&(\text{since $\zeta(2)-1 = \tfrac{\pi^2-6}{6}$}) \label{eq:FarPlus}
\end{align}

\subsubsection{Final prime-power estimate.}

Using \eqref{eq:Near}, \eqref{eq:FarMinus}, and \eqref{eq:FarPlus}, we can bound
\begin{equation*}
    S_{\mathrm{power}}(x)
    \leq S_{\mathrm{sqf}} (x)  = S_{\mathrm{near}}(x) + S_{\mathrm{far}}^-(x)  + S_{\mathrm{far}}^+(x) .
\end{equation*}
We postpone doing this explicitly until the finale below.

\subsection{Conclusion}\label{Subsection:Conclusion}
For $x \geq 3.5$, with $T \geq (\log \frac{3}{2})^{-1}$, the sum \eqref{eq:MainSum} is bounded by
\begin{align*}
    &\underbrace{\frac{e}{T \log \frac{3}{2}}\bigg(\log \log x + \frac{\gamma}{\log x} \bigg)}_{\text{by \eqref{eq:VeryFar}}}
    + \underbrace{\frac{e }{T \log\frac{3}{2}} \sum_{\frac{x}{2} < p^k < \frac{3x}{2}}  \frac{1}{k |  x-p^k |} }_{\text{by \eqref{eq:FirstPP}}} \\
    &\quad \leq \frac{e}{T \log \frac{3}{2}} \bigg(\log \log x + \frac{\gamma}{\log x} \bigg)
    + \frac{e}{T \log \frac{3}{2}} \underbrace{ \big(S_{\text{prime}}(x) + S_{\text{sqf}}(x) \big)}_{\text{by \eqref{eq:Trouble} and \eqref{eq:SumEverything}}} \\
    &\quad \leq \frac{e}{T \log \frac{3}{2}} \bigg(\log \log x + \frac{\gamma}{\log x} \bigg)
    + \frac{e}{T \log \frac{3}{2}}\bigg[ \underbrace{2C +4 \log\log x}_{\text{$S_{\text{prime}}(x)$ bounded by \eqref{eq:PrimeBoundFinal}}} \\
    &\qquad + 
    \underbrace{ \bigg(\log \log x  + \gamma + 2\log 2 + \log 2.4 +\frac{7}{156}  \bigg) }_{\text{$S_{\text{near}}(x)$ bounded by \eqref{eq:Near}}} 
    + \underbrace{\frac{\pi^2-6}{3\sqrt{2 x}}  \bigg(\frac{1}{2}\log{x} - \frac{3}{2}\log{2} + \gamma + \frac{3}{7} \bigg)}_{\text{$S_{\text{far}}^-$ bounded by \eqref{eq:FarMinus}}} \\
    &\qquad + \underbrace{\frac{\pi^2-6}{6\sqrt{x}}  \bigg(\frac{1}{2}\log{x} - 2\log{2} + \gamma + \frac{3}{7} \bigg)}_{\text{$S_{\text{far}}^+(x)$ bounded by \eqref{eq:FarPlus}}}
     \bigg] \\
    &\quad 
    < \frac{1}{T}\bigg(
    40.22465 \log \log x
    +58.11106
    +\frac{3.86972}{\log x}
    +\frac{5.21918 \log x}{\sqrt{x}}
    -\frac{1.85268}{\sqrt{x}}
    \bigg). \qed
\end{align*}


\bibliographystyle{amsplain}
\bibliography{PerronErrorTerm}

\end{document}